\documentclass{article}
\usepackage{amssymb,amsmath,amsthm,mathrsfs,tikz-cd}
\usepackage[title]{appendix}

\title{On the Rational Cuspidal Divisor Class Groups of Drinfeld Modular Curves $X_0(\mathfrak{p}^r)$}
\author{Sheng-Yang Kevin Ho}
\date{}

\newcommand{\Gal}{\operatorname{Gal}}

\newcommand{\GL}{\operatorname{GL}}
\newcommand{\PGL}{\operatorname{PGL}}
\newcommand{\Div}{\operatorname{Div}}
\newcommand{\divisor}{\operatorname{div}}
\newcommand{\cusp}{\operatorname{cusp}}
\newcommand{\ord}{\operatorname{ord}}
\newcommand{\lcm}{\operatorname{lcm}}

\newcommand{\denominator}{\operatorname{denominator}}
\newcommand{\CC}{\mathcal{C}}
\newcommand{\TT}{\mathcal{T}}
\newcommand{\Tree}{\mathscr{T}}
\newcommand{\p}{\mathfrak{p}}
\newcommand{\n}{\mathfrak{n}}
\newcommand{\m}{\mathfrak{m}}
\newcommand{\dd}{\mathfrak{d}}
\newcommand{\HarZ}{\mathcal{H}(\Tree,\mathbb{Z})}
\newcommand{\HarQ}{\mathcal{H}(\Tree,\mathbb{Q})}

\theoremstyle{plain}
\newtheorem{thm}{Theorem}[section]
\newtheorem{lem}[thm]{Lemma}
\newtheorem{conj}[thm]{Conjecture}

\newtheorem{cor}[thm]{Corollary}
\newtheorem*{mytheorem}{Main Theorem}

\theoremstyle{definition}
\newtheorem{defn}[thm]{Definition}

\theoremstyle{remark}

\newtheorem*{Rem}{Remark}

\NewDocumentCommand{\tens}{e{_^}}{%
  \mathbin{\mathop{\otimes}\displaylimits
    \IfValueT{#1}{_{#1}}
    \IfValueT{#2}{^{#2}}
  }%
}

\newcommand\bigzero{\makebox(0,0){\text{\huge0}}}

\newcommand*{\bord}{\multicolumn{1}{c|}{}}
\newcommand\restr[2]{{
  \left.\kern-\nulldelimiterspace 
  #1 
  \vphantom{\big|} 
  \right|_{#2} 
  }}

\begin{document}
\maketitle

\begin{abstract}
Let $\mathcal{C}(\mathfrak{p}^r)$ be the rational cuspidal divisor class group of the Drinfeld modular curve $X_0(\mathfrak{p}^r)$ for a prime power level $\mathfrak{p}^r\in \mathbb{F}_q[T]$. We relate the rational cuspidal divisors of degree $0$ on $X_0(\mathfrak{p}^r)$ with $\Delta$-quotients, where $\Delta$ is the Drinfeld discriminant function. As a result, we are able to determine explicitly the structure of $\mathcal{C}(\mathfrak{p}^r)$ for arbitrary prime $\mathfrak{p}\in \mathbb{F}_q[T]$ and $r\geq 2$.
\end{abstract}

\section{Introduction}
\subsection{Notation}
\begin{tabular}{p{0.08\textwidth}p{0.8\textwidth}}
$\mathbb{F}_q$ & = finite field of characteristic $p$ with $q$ elements\\
$A$ & = $\mathbb{F}_q[T]$ polynomial ring in $T$ over $\mathbb{F}_q$\\
$K$ & = $\mathbb{F}_q(T)$ rational function field\\
$K_\infty$ & = $\mathbb{F}_q((\pi))$ the completion of $K$ at the infinite place ($\pi:=T^{-1}$)\\
$|\cdot|$ & = $|\cdot|_\infty$ = normalized absolute value on $K_\infty$ ($|T|_\infty:=q$)\\
$\mathcal{O}_\infty$ & = $\mathbb{F}_q[[\pi]]$ ring of integers in $K_\infty$\\
$\mathbb{C}_\infty$ & = the completion of an algebraic closure of $K_\infty$\\
$G$ & = group scheme $\GL(2)$ over $\mathbb{F}_q$\\
$Z$ & = scalar matrices in $G$\\
$\mathcal{K}$ & = $G(\mathcal{O}_\infty)$\\
$\mathcal{I}$ & = $\left\{
\begin{pmatrix}
a & b\\
c & d
\end{pmatrix}
\in \mathcal{K}\bigm\vert c\equiv 0 \bmod \pi\right\}$ Iwahori subgroup of $\mathcal{K}$\\
$\Tree$ & = Bruhat-Tits tree of $\PGL(2,K_\infty)$\\
$V(\Tree)$ & = $G(K_\infty)/\mathcal{K}\cdot Z(K_\infty)$ vertices of $\Tree$\\
$E(\Tree)$ & = $G(K_\infty)/\mathcal{I}\cdot Z(K_\infty)$ oriented edges of $\Tree$
\end{tabular}

\subsection{Motivation}
For a positive integer $N$, let $J_0(N)$ be the Jacobian variety of the classical modular curve $X_0(N)$ and $\TT(N):=J_0(N)(\mathbb{Q})_{\text{tors}}$ its rational torsion subgroup. By the Mordell-Weil theorem, $\TT(N)$ is a finite abelian group. Let $\CC_N$ be the cuspidal subgroup of $J_0(N)$ and $\CC_N(\mathbb{Q})$ its rational subgroup. Let $\CC(N)$ be the rational cuspidal divisor class group of $X_0(N)$; cf. \cite{YOO_2023}. By a theorem of Manin and Drinfeld, $\CC_N$ is a finite group, so we have $$\CC(N)\subseteq \CC_N(\mathbb{Q})\subseteq \TT(N).$$ In the early 1970s, for any prime $p$, Ogg \cite{Ogg} conjectured that $$\CC(p)=\CC_p(\mathbb{Q})=\TT(p)$$
and computed that $\CC(p)$ is a cyclic group generated by $\overline{[0]-[\infty]}$ of order $\frac{p-1}{(p-1, 12)}$. Later in 1977, Mazur \cite{mazur_modular_1977} proved this conjecture by studying the Eisenstein ideal of the Hecke algebra of level $p$. A generalized Ogg's conjecture states as follows:
\begin{conj}[still open]
For any positive integer $N$, $$\CC(N)= \CC_N(\mathbb{Q})= \TT(N).$$
\end{conj}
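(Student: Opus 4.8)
The plan is to split the conjecture along the chain $C(N)\subseteq C_N(\mathbb{Q})\subseteq \mathcal{T}(N)$ recalled above into two equalities, $C(N)=C_N(\mathbb{Q})$ and $C_N(\mathbb{Q})=\mathcal{T}(N)$, and to prove each separately. The first I would attack entirely on the level of cuspidal divisors: a class in $C_N(\mathbb{Q})$ is represented by a $\Gal(\overline{\mathbb{Q}}/\mathbb{Q})$-stable divisor class supported on the cusps, and the only question is whether it is represented by a genuinely $\mathbb{Q}$-rational divisor; the obstruction to this lies in a first Galois cohomology group, which one controls using the explicit (cyclotomic) fields of definition of the cusps of $X_0(N)$ and the known Galois action on them, reducing this half to a finite combinatorial check. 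In the Drinfeld case $X_0(\n)/K$ the same reduction applies, with the cusps indexed by the monic divisors of $\n$ and their fields of definition described by class field theory.

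The real content is $C_N(\mathbb{Q})=\mathcal{T}(N)$, and here I would follow Mazur's Eisenstein-ideal method. Let $\mathbb{T}$ be the Hecke algebra acting on $J_0(N)$ and $I\subseteq\mathbb{T}$ the (generalized) Eisenstein ideal, so that $\mathbb{T}/I$ is finite of order essentially that of the cuspidal group. One must show: (i) for every maximal ideal $\mathfrak{m}\supseteq I$ the $\mathfrak{m}$-torsion of $J_0(N)$ is accounted for by the cuspidal group — this rests on a multiplicity-one statement for the relevant $\mathbb{T}$-modules together with an analysis of the component groups of the N\'eron model of $J_0(N)$ at primes dividing $N$; and (ii) for every non-Eisenstein maximal ideal $\mathfrak{m}$ the associated mod-$\mathfrak{m}$ Galois representation is irreducible and suitably ramified, hence has no $\Gal$-fixed line, so contributes no rational torsion. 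Together (i) and (ii) force $\mathcal{T}(N)$ into the Eisenstein (hence cuspidal) part. In the function-field setting one replaces the complex uniformization of $J_0(\n)$ by the Gekeler--Reversat rigid-analytic one and modular symbols by harmonic cochains on the Bruhat--Tits tree $\mathcal{T}$, i.e.\ the modules $\HarZ$ and $\HarQ$; the Eisenstein ideal and its index are then extracted from the combinatorics of $\mathcal{T}$ and of the cusps of $X_0(\n)$ — exactly the kind of data the present paper computes for $\n=\p^r$.

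The hard part, and the reason the conjecture is still open, is step (i) when $N$ is composite: there are then several cusps and several independent Eisenstein series, ``the'' Eisenstein ideal must be replaced by a family of generalized Eisenstein ideals indexed by the divisors of $N$, the cuspidal group need not be cyclic, and the multiplicity-one inputs can fail or require delicate case work (the results of Ohta, Yoo and others cover many but not all $N$). Two further technical obstacles are the $p$-primary torsion for small primes ($p\mid N$ or $p\in\{2,3\}$), which typically needs separate treatment via component groups, and the fact that the equality $C_N(\mathbb{Q})=\mathcal{T}(N)$ has to be checked prime by prime, with the Eisenstein upper bound and the non-Eisenstein vanishing meeting exactly. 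For $X_0(\p^r)/K$ I would expect the same dichotomy: once the order and structure of $C(\p^r)$ are in hand — the goal of this paper — the remaining task is to show the Eisenstein quotient of the Hecke algebra controls all of $J_0(\p^r)(K)_{\mathrm{tors}}$, and the $(q-1)$-part, precisely the part this paper cannot determine from the cuspidal side alone, is where I expect the genuine difficulty to lie.
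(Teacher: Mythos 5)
The statement you were asked to prove is explicitly labeled in the paper as a conjecture that is ``still open''; the paper offers no proof of it and only establishes much weaker special cases (the order of $[0]-[\infty]$ in $C(\p^r)$ under a gcd hypothesis, and the prime-to-$(q-1)$ part of $C(T^r)$). So there is no proof in the paper to compare against, and, more importantly, your text is not a proof either: it is a survey of the strategy one would like to carry out, and you yourself concede the decisive points. For the equality $C_N(\mathbb{Q})=\mathcal{T}(N)$, your step (i) requires multiplicity-one statements for the Eisenstein maximal ideals that are known to fail for certain composite $N$ (this is precisely why the results of Ohta, Yoo and others do not cover all levels), and your step (ii) requires ruling out rational torsion at non-Eisenstein maximal ideals, which for composite $N$ and small residue characteristic ($p\mid N$ or $p\in\{2,3\}$) is not established. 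Writing ``the results cover many but not all $N$'' is an admission that the argument does not close; a proof cannot delegate its hardest case to future work.

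The first half, $C(N)=C_N(\mathbb{Q})$, is also not the ``finite combinatorial check'' you describe. The obstruction to representing a Galois-stable cuspidal divisor class by a rational cuspidal divisor lives in a first Galois cohomology group of the cuspidal divisor group, and for composite $N$ the cusps are defined over cyclotomic fields of varying conductors, so this module is genuinely nontrivial and the vanishing you need is an argument, not an observation. In the Drinfeld setting the paper itself flags the primes dividing $q-1$ as the analogue of the prime $2$ over $\mathbb{Q}$ and admits it cannot control even the $(q-1)$-part of the cuspidal group $C(T^r)$, let alone the full torsion of $J_0(\n)(K)$. In short, there is no localized gap to repair here so much as an entire proof to supply: the statement remains open, and your proposal should be read as a (reasonable) research program, not a proof.
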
 We focus on the structure of $\CC(N)$. In 1997, Ling \cite{ling_q-rational_1997} computed the structure of $\CC(p^r)$, where $p\geq 3$ is a prime and $r\geq 1$. Recently in 2023, by Yoo, Lorenzini, Takagi, Chua, Rouse, Webb, and etc., the
structure of $\CC(N)$ for arbitrary positive $N$ was completely determined. See \cite{YOO_2023} for more details.

In this paper, we study an analogue of the above results in the function field setting. Now, let $\n\in A$ be monic and $\Gamma_0(\n)$ be the congruence subgroup of $\Gamma = G(A)$ consisting of matrices that are upper triangular modulo $\n$. Let $\Omega = \mathbb{C}_\infty - K_\infty$ be the Drinfeld upper half plane. Let $\Gamma_0(\n)$ act on $\Omega$ by linear fractional transformations. Drinfeld proved in \cite{drinfeld_elliptic_1974} that the quotient $\Gamma_0(\n)\backslash \Omega$ is the space of $\mathbb{C}_\infty$-points of an affine curve $Y_0(\n)$ defined over $K$, which is a moduli space of rank-$2$ Drinfeld modules. The unique smooth projective curve over $K$ containing $Y_0(\n)$ is denoted by $X_0(\n)$, which is called the Drinfeld modular curve of level $\n$. Let $J_0(\n)$ be the Jacobian variety of $X_0(\n)$ and $\TT(\n):=J_0(\n)(K)_{\text{tors}}$ its rational torsion subgroup. By the Lang-N\'eron theorem, $\TT(\n)$ is a finite abelian group. Let $\CC_\n$ be the cuspidal subgroup of $J_0(\n)$, which is a subgroup of $J_0(\n)$ generated by the linear equivalence classes of the differences of cusps. Let $\CC_\n(K)$ be the rational cuspidal subgroup of $J_0(\n)$, which is the group of the rational points on $\CC_\n$. Let $\CC(\n)$ be the rational cuspidal divisor class group of $X_0(\n)$, which is a subgroup of $J_0(\n)$ generated by the linear equivalence classes of the degree $0$ rational cuspidal divisors on $X_0(\n)$. By Gekeler \cite{gekeler_note_2000}, $\CC_\n$ is a finite group, so we have $$\CC(\n)\subseteq \CC_\n(K)\subseteq \TT(\n).$$ As an analogue of generalized Ogg's conjecture, we have the following:
\begin{conj}
For any monic $\n\in A$, $$\CC(\n)= \CC_\n(K)= \TT(\n).$$
\end{conj}
This conjecture is true when $\n=\p$ is a prime in $A$ by P{\'a}l \cite{Pal2005} (a prime in $A$ means a monic irreducible element); it is also true when $\n=T^3$ or $T^2(T-1)$ by Papikian and Wei \cite{papikian_eisenstein_2016}. However, the conjecture
is still open in general.

We study the structure of $\CC(\p^r)$ in $\TT(\p^r)$, where $\p\in A$ is a prime and $r\geq 1$. To simplify the notation, denote $M(\p) := \frac{|\p|^2-1}{q^2-1}$ and $$N(\p):=\begin{cases}
\frac{|\p|-1}{q^2-1},  & \text{if $\deg(\p)$ is even.} \\
\frac{|\p|-1}{q-1},  & \text{otherwise.}
\end{cases}$$ There are some known results:
\begin{thm}[Gekeler \cite{gekeler_1997}]\label{Gekeler}
For a prime $\p\in A$, the group $\CC(\p)$ is cyclic of order $N(\p)$ and generated by $\overline{[0]-[\infty]}$.
\end{thm}

\begin{thm}[Papikian and Wei \cite{papikian_eisenstein_2016}]\label{Papikian and Wei}
The group $C(T^3)$ is cyclic of order $q^2$ and generated by $\overline{[0]-[\infty]}$.
\end{thm}

As a main result, we prove the following:
\begin{mytheorem} [Theorem \ref{Main Theorem}]
Fix a prime $\p\in A$ and $r\geq 2$. Let $C_i$, $D_{r-1}$, and $D_0$ be defined in section \ref{section: Main Theorem}. We have $$\CC(\p^r) = \left(\bigoplus_{1\leq i\leq m}\langle \overline{C_i}\rangle\right)\oplus\left(\bigoplus_{m+1\leq i\leq r-2}\langle \overline{C_i-|\p| C_{i+1}}\rangle\right)\oplus \langle \overline{D_{r-1}}\rangle\oplus \langle  \overline{D_0}\rangle,$$ where $m:=\lfloor \frac{r-1}{2}\rfloor$ and
\begin{enumerate}
    \item $\ord(\overline{C_i})=|\p|^{r-i}M(\p)$ for $1\leq i\leq m$.
    \item $\ord(\overline{C_i-|\p| C_{i+1}})=|\p|^{i}M(\p)$ for $m+1\leq i\leq r-2$.
    \item $\ord(\overline{D_{r-1}})=M(\p)$.
    \item $\ord(\overline{D_0})=N(\p)$.
\end{enumerate}
\end{mytheorem}

\begin{Rem} For a prime $\p\in A$ and $r\geq 1$, we observe the followings:
\begin{enumerate}
    \item The group structure of $\CC(\p^r)$ only depends on $r$ and the degree of $\p$.
    \item If $r = 1$ or $2$, then $\CC(\p^r)$ is $p$-torsion free, where $p:=\text{char}(\mathbb{F}_q)$.
    \item If $r \geq 3$, then $\CC(\p^r)$ is $p$-primary if and only if the degree of $\p$ is $1$.
\end{enumerate}
\end{Rem}

\begin{cor}
For a prime $\p\in A$ and $r\geq 1$, the order of $\CC(\p^r)$ is 
$$|\p|^k\cdot M(\p)^{r-1}\cdot N(\p),$$ where
$$k = \begin{cases}
    \frac{3}{4}r^2-2r+1,  & \text{if $r$ is even.} \\
    \frac{3}{4}r^2-2r+\frac{5}{4},  & \text{otherwise.}
    \end{cases}$$
\end{cor}

\subsection{Idea of the proof}
In the following, we briefly discuss the idea of the proof of Main Theorem.

\begin{lem}[Gekeler \cite{gekeler_1997}]\label{cusp rep}
Let $\n\in A$ be monic. The cusps of $X_0(\n)$ are in bijection with $\Gamma_0(\n)\backslash \mathbb{P}^1(K)$. Moreover, every cusp of $X_0(\n)$ has a representative $\begin{bmatrix} \mathfrak{a} \\ \dd \end{bmatrix}$ in $\Gamma_0(\n)\backslash \mathbb{P}^1(K)$, where $\mathfrak{a},\dd\in A$ are monic, $\dd|\n$, and $\gcd(\mathfrak{a},\n)=1$.
\end{lem}

Fix a prime power $\n=\p^r\in A$. A cusp of $X_0(\n)$ with representative $\begin{bmatrix} \mathfrak{a} \\ \dd \end{bmatrix}$ in the above lemma is called of height $\dd$; cf. \cite[(2.4)]{gekeler_fundamental_1995}. Note that the height $\dd$ is uniquely defined by \cite[lemma 3.1]{papikian_eisenstein_2016}. Let $(P_\dd)$ be the sum of all the cusps of $X_0(\n)$ of height $\dd\mid \n$. The cuspidal divisors $(P_\dd)$ are $K$-rational in the sense that they are invariant under $\Gal(\overline{K}/K)$ by \cite[prop. 6.3]{gekeler_invariants_2001}. Indeed, the cusps of $X_0(\n)$ of the same height form an orbit under $\Gal(\overline{K}/K)$. Note that $$[0]:=\begin{bmatrix}
0 \\ 1
\end{bmatrix}=\begin{bmatrix}
1 \\ 1
\end{bmatrix}=(P_1)\text{ and }[\infty]:=\begin{bmatrix}
1 \\ 0
\end{bmatrix}=\begin{bmatrix}
1 \\ \n
\end{bmatrix}=(P_\n)$$ are two rational cusps of $X_0(\n)$.

A modular function on $X_0(\n)$ is a meromorphic function on $\Omega \cup\ \mathbb{P}^1(K)$ which is invariant under the action of $\Gamma_0(\n)$. A modular unit on $X_0(\n)$ is a modular function on $X_0(\n)$ that does not have zeros or poles on $\Omega$. Let $$\Div_{\cusp}^0(X_0(\n))(K):=\left\{C = \sum_{\substack{\dd|\n\\\text{monic}}}a_\dd\cdot (P_\dd)\bigm\vert \deg(C)=0, a_\dd\in \mathbb{Z}\right\}$$ be the group of the degree $0$ rational cuspidal divisors on $X_0(\n)$, where ``rational'' means $\Gal(\overline{K}/K)$-invariant. Let $\mathcal{U}_\n$ be its subgroup consisting of the divisors of modular units. Let $C_i := (P_{\p^i})-\deg(P_{\p^i})\cdot[\infty]\in \Div_{\cusp}^0(X_0(\n))(K)$, where $0\leq i \leq r-1$. Then $$\CC(\n):=\Div_{\cusp}^0(X_0(\n))(K)/\mathcal{U}_\n$$ is generated by $\{\overline{C_0},\overline{C_1},\cdots,\overline{C_{r-1}}\}$. Let $\mathcal{E}_{\n}$ be the group of modular units on $X_0(\n)$. As an analogue of the classical case, we will construct a map $g: \Div_{\cusp}^0(X_0(\n))(K) \rightarrow \mathcal{E}_{\n}\tens_{\mathbb{Z}}\mathbb{Q}$ in section \ref{Section: the key map g}. By the construction, for $C\in \Div_{\cusp}^0(X_0(\n))(K)$, the order of $\overline{C}$ in $\CC(\n)$ is the smallest number $m>0$ such that $g(m C)\in \mathcal{E}_{\n}$, i.e., $m C$ is the divisor of the modular unit $g(m C)$ on $X_0(\n)$. Here, we identify $f\in \mathcal{E}_{\n}$ with $f\otimes 1\in \mathcal{E}_{\n}\tens_{\mathbb{Z}}\mathbb{Q}$.

Fix $C\in \Div_{\cusp}^0(X_0(\n))(K)$. We want to find the order of $\overline{C}$ in $\CC(\n)$. The approach in \cite{ling_q-rational_1997} and \cite{YOO_2023} in the classical setting does not easily transfer to the function field setting due to technical difficulties in proving Ligozat’s proposition, which completely describes the modular units; cf. \cite[prop. 3.5]{YOO_2023}. More precisely, the classical discriminant function has a $24$-th root $\eta(z)$, which is a holomorphic function on the complex upper half plane with no zeros. However, let $\mathcal{O}(\Omega)^\ast$ be the group of non-vanishing holomorphic rigid-analytic functions on $\Omega$, then the Drinfeld discriminant function $\Delta(z)$ only has a maximal $(q-1)$-th root in $\mathcal{O}(\Omega)^\ast$; cf. \cite{gekeler_1997}. If one tries to find a $(q-1)(q^2-1)$-th root (up to constant multiple) of $\Delta(z)$, there is only a formal product in $t^{\frac{1}{q^2-1}}$ by \cite{Gekeler1984} and \cite{GEKELER_product_expansion}:
$$\widetilde{\eta}(z) := t^{\frac{1}{q^2-1}} \prod_{\substack{0\neq \mathfrak{a}\in A\\\text{monic}}}f_\mathfrak{a}(t),$$ which is not in $\mathcal{O}(\Omega)^\ast$. In \cite{GEKELER_product_expansion}, $\widetilde{\pi}A$ is the Carlitz period, $t := t(z) := \exp_{\widetilde{\pi}A}^{-1}(\widetilde{\pi}z)$, and $f_\mathfrak{a}$'s are specific polynomials over $\mathbb{C}_\infty$ derived from the Carlitz module.

Instead of finding a root of $\Delta(z)$, there is a $k$-th root of $\frac{\Delta(z)}{\Delta(\n z)}$ in $\mathcal{O}(\Omega)^\ast$ with 
$$k = \begin{cases}
(q-1)(q^2-1),  & \text{if $\deg(\n)$ is even.} \\
(q-1)^2,  & \text{otherwise.}
\end{cases}$$
For details, see \cite{gekeler_1997} and section \ref{section: D_n}. Although $k$ is still smaller than $(q-1)(q^2-1)$ when $\deg(\n)$ is odd, by rewriting $g(C)$ with roots of $\Delta$-quotients carefully, we are able to find the optimal upper bound $m_1$ of $\ord(\overline{C})$; cf. section \ref{section: D_n} and lemma \ref{upper bound}. To show that $m_1$ is optimal, we consider the followings:
\begin{defn} (van der Put)
Let $R$ be a commutative ring with unity. An $R$-valued harmonic cochain on $\Tree$ is a function $f: E(\Tree)\rightarrow R$ that satisfies
\begin{enumerate}
    \item $$f(e)+f(\overline{e})=0\text{ for all }e\in E(\Tree).$$
    \item $$\sum_{\substack{e\in E(\Tree)\\t(e)=v}}f(e)=0\text{ for all }v\in V(\Tree).$$
\end{enumerate}
Here, for $e\in E(\Tree)$, $t(e)$ is its terminus and $\overline{e}$ is its inversely oriented edge. Denote $\mathcal{H}(\Tree, R)$ the group of $R$-valued harmonic cochains on $\Tree$.
\end{defn}
\begin{thm}[van der Put \cite{VanderPut1981-1982}]
There is a canonical exact sequence of $G(K_\infty)$-modules $$0\rightarrow\mathbb{C}_\infty^\ast\rightarrow\mathcal{O}(\Omega)^\ast\overset{\widetilde{r}}{\rightarrow} \HarZ \rightarrow 0.$$
\end{thm}
The van der Put map $\widetilde{r}$ extends naturally to the map $$\widetilde{r}:\mathcal{E}_{\n}\tens_{\mathbb{Z}}\mathbb{Q}\hookrightarrow\mathcal{O}(\Omega)^\ast\tens_{\mathbb{Z}}\mathbb{Q}\overset{\widetilde{r}\otimes 1}{\rightarrow}\HarZ\tens_{\mathbb{Z}}\mathbb{Q}\hookrightarrow\HarQ.$$ The smallest positive number $m_2$ with $\widetilde{r}\circ g(m_2 C)\in \HarZ$ is a lower bound for $\ord(\overline{C})$. When $m_1 = m_2$, the bounds are optimal. By a further argument if $m_1 > m_2$, we are still able to prove that $m_1=\ord(\overline{C})$; cf. theorem \ref{subtheorem}.

In the final step, the goal is to write the group $\CC(\n)$ as a product of cyclic groups. Note that $\CC(\n)$ is generated by $\{\overline{C_0}, \overline{C_1},\cdots,\overline{C_{r-1}}\}$. However, there are nontrivial relations between $\overline{C_i}$'s in $\CC(\n)$. By applying lemma \ref{linear independence lemma} successively, we construct a modified generating set $\mathcal{B}$ for $\CC(\n)$ defined in the proof of theorem \ref{Main Theorem} so that $\CC(\n)$ can be expressed as a direct sum of cyclic groups generated by the elements in $\mathcal{B}$. The process of determining the elements in $\mathcal{B}$ and finding suitable edges in $E(\Tree)$ that meet the criteria in lemma \ref{linear independence lemma} constitutes the technical aspect. After multiple attempts, the author eventually succeeded in discovering the desired generating set $\mathcal{B}$.

\section{Preliminaries}
\subsection{Connection with $\Delta$-quotients}\label{Section: the key map g}
Fix a prime power $\n=\p^r\in A$. Let $\Delta(z)$ be the Drinfeld discriminant function defined in \cite{gekeler_1997} with $\Delta_\dd(z):= \Delta(\dd z)$ for $\dd|\n$. Then $\Delta_\dd(z)$ are modular forms on $\Omega$ of weight $q^2-1$ and type $0$ for $\Gamma_0(\n)$ for all $\dd|\n$; cf. \cite{gekeler_1997}. The zero orders of $\Delta_\dd(z)$ at the cusps of $X_0(\n)$ are defined in \cite{gekeler_drinfeld_1986}. Let $\begin{bmatrix}
\mathfrak{a} \\ \p^j
\end{bmatrix}$ be a cusp of $X_0(\n)$ in the form of lemma \ref{cusp rep}. By Gekeler \cite[eq. (3.10) and (3.11)]{gekeler_1997}, we have
\begin{align}
&\ord_{\small{\begin{bmatrix}
\mathfrak{a} \\ \p^j
\end{bmatrix}}}\Delta = \frac{q-1}{\rho(j)}|\p|^{r-\min\{2j,r\}} \label{align_1}\\
&\ord_{\small{\begin{bmatrix}
\mathfrak{a} \\ \p^j
\end{bmatrix}}}\Delta_\n = \frac{q-1}{\rho(j)}|\p|^{r-\min\{2(r-j),r\}},\label{align_2}
\end{align}
where $$\rho(j)=\begin{cases}
1,  & \text{if $0<j<r$.} \\
q-1,  & \text{otherwise.}
\end{cases}$$

Fix $1 \leq i < r$, and consider the degeneracy map $f: X_0(\n)\rightarrow X_0(\p^{i})$ defined in \cite[(2.7)]{papikian_eisenstein_2016}. Then we prove the following lemma:
\begin{lem} [cf. {\cite[fig. 9]{papikian_eisenstein_2016}}] \label{ramification index}
The ramification index of the cusp $\begin{bmatrix}
\mathfrak{a} \\ \p^j
\end{bmatrix} \text{mod}~\Gamma_0(\n)$ of $X_0(\n)$ of height $\p^j$ over the cusp $\begin{bmatrix}
\mathfrak{a} \\ \p^j
\end{bmatrix} \text{mod}~\Gamma_0(\p^i)$ of $X_0(\p^i)$ is $$\begin{cases}
|\p|^{\max\{2j, r\}-\max\{2j, i\}},  & \text{if $0\leq j < i$.} \\
\frac{q-1}{\rho(j)}|\p|^{r-\min\{2j,r\}},  & \text{if $i\leq j \leq r$.}
\end{cases}$$
\end{lem}

\begin{proof}
The zero order of $\Delta$ at the cusp $\begin{bmatrix}
\mathfrak{a} \\ \p^j
\end{bmatrix} \text{mod}~\Gamma_0(\p^i)$ of $X_0(\p^i)$ is 
\begin{equation}
\begin{cases}
\frac{q-1}{\rho(j)}|\p|^{i-\min\{2j,i\}},  & \text{if $0\leq j < i$.} \\
1,  & \text{if $i\leq j \leq r$.}
\end{cases}\label{align_3}
\end{equation}
The result follows by dividing equation (\ref{align_1}) with equation (\ref{align_3}).
\end{proof}

For $\p^i\mid \n$, we know the divisor of $\Delta_{\p^i}$ on $X_0(\p^i)$. By the pullback of $f$ and lemma \ref{ramification index}, we also find the divisor of $\Delta_{\p^i}$ on $X_0(\n)$:
$$
\begin{bmatrix}
\divisor(\Delta)\\
\divisor(\Delta_\p)\\
\divisor(\Delta_{\p^2})\\
\vdots\\
\divisor(\Delta_{\p^r})
\end{bmatrix}
=\Lambda(\n)^\intercal\cdot
\begin{bmatrix}
(P_1)\\
(P_\p)\\
(P_{\p^2})\\
\vdots\\
(P_{\p^r})
\end{bmatrix},
$$
where $(P_{\dd})$ is the sum of all the cusps of $X_0(\n)$ of height $\dd\mid \n$, and 
$$\Lambda(\n)^\intercal=
\begin{bmatrix}
|\p|^r & (q-1)|\p|^{r-2} & \cdots & q-1 & 1\\
|\p|^{r-1} & (q-1)|\p|^{r-1} & \ddots & \vdots & \vdots\\
|\p|^{r-2} & (q-1)|\p|^{r-2} & \ddots & (q-1)|\p|^{r-2} & |\p|^{r-2}\\
\vdots & \vdots & \ddots & (q-1)|\p|^{r-1} & |\p|^{r-1}\\
1 & q-1 & \cdots & (q-1)|\p|^{r-2} & |\p|^r
\end{bmatrix}_{0\leq i, j\leq r}
$$
is a matrix with the $(i,j)$-entries defined by
$$\frac{q-1}{\rho(j)}|\p|^{\max\{j,r-j\}-|i-j|}.$$ One can check that the transpose $\Lambda(\n)$ of $\Lambda(\n)^\intercal$ is invertible over $\mathbb{Q}$ with
\begin{align*}
&\Lambda(\n)^{-1} = \frac{1}{(q-1)(|\p|^{r+1}-|\p|^{r-1})}\times \\
&\begin{bmatrix}
(q-1)|\p| & -|\p| &&&& \\
1-q & |\p|^2+1 &&&&& \\
& -|\p| & \ddots & -|\p|^{m(j)}&&& \\
&&& (|\p|^2+1)|\p|^{m(j)-1} &&& \\
&&& -|\p|^{m(j)} & \ddots & -|\p|& \\
&&&&& |\p|^2+1 & 1-q \\
&&&&& -|\p| & (q-1)|\p|
\end{bmatrix},
\end{align*}
where $m(j):=\min\{j,r-j\}$, and the $(i,j)$-entry of $\Lambda(\n)^{-1}$ is
$$
\frac{1}{(q-1)(|\p|^{r+1}-|\p|^{r-1})}\times \begin{cases}
(|\p|^2+1)|\p|^{m(j)-1},  & \text{if $1\leq i=j\leq r-1$.} \\
-|\p|^{m(j)},  & \text{if $|i-j|=1$ and $j\neq 0, r$.} \\
(q-1)|\p|,  & \text{if $(i,j)=(0,0)$ or $(r,r)$.} \\
1-q,  & \text{if $(i,j)=(1,0)$ or $(r-1,r)$.} \\
0,  & \text{otherwise.}
\end{cases}
$$
Recall that $\mathcal{E}_{\n}$ is the group of modular units on $X_0(\n)$. To simplify the notation, we denote $f\otimes a\in \mathcal{E}_{\n}\tens_{\mathbb{Z}}\mathbb{Q}$ formally by $f^a$. Since $f^b \otimes a = f\otimes ba$ for $b\in \mathbb{Z}$, we identify $(f^b)^a$ with $f^{ba}$. We construct the following group homomorphism:
\[\begin{tikzcd}
g: &\Div_{\cusp}^0(X_0(\n))(K) \arrow[r] &\displaystyle\mathcal{E}_{\n}\tens_{\mathbb{Z}}\mathbb{Q}\\
&C=\displaystyle\sum_{\substack{\dd|\n\\\text{monic}}}a_\dd(P_\dd) \arrow[r,mapsto] & \displaystyle\prod_{\substack{\dd|\n\\\text{monic}}}\Delta_\dd^{r_\dd},
\end{tikzcd}\]
where $r_\dd\in \mathbb{Q}$ are defined by
$$\begin{bmatrix}
r_1\\
r_\p\\
r_{\p^2}\\
\vdots\\
r_{\p^r}
\end{bmatrix}=\Lambda(\n)^{-1}\cdot\begin{bmatrix}
a_1\\
a_\p\\
a_{\p^2}\\
\vdots\\
a_{\p^r}
\end{bmatrix}.$$ By the construction, for $C\in \Div_{\cusp}^0(X_0(\n))(K)$ with $g(C) = \prod_{\substack{\dd|\n\\\text{monic}}}\Delta_\dd^{r_\dd}$, we have $\sum_{\substack{\dd|\n\\\text{monic}}}r_\dd\cdot\divisor(\Delta_\dd) = C$. This implies that $\sum_{\substack{\dd|\n\\\text{monic}}}r_\dd=0$ since $\deg(C) = 0$ and $\deg(\divisor(\Delta))=\deg(\divisor(\Delta_\dd))>0$ on $X_0(\n)$ for all $\dd\mid \n$. Thus, $g$ is well-defined as $\frac{\Delta}{\Delta_\dd}\in \mathcal{E}_{\n}$ for all $\dd\mid \n$; cf. \cite{gekeler_1997}. The images of $g$ are called $\Delta$-quotients.

\begin{lem}
Let $\n=\p^r\in A$ be a prime power. The degree of the rational cuspidal divisor $(P_{\p^i})$ of height $\p^i\mid \n$ on $X_0(\n)$ is
$$
\begin{cases}
\frac{|\p|-1}{q-1}|\p|^{\min\{i, r-i\}-1},  & \text{if $0 < i < r$.}\\
1,  & \text{otherwise.}
\end{cases}
$$
\end{lem}

\begin{proof}
Recall that $(P_1) = [0]$ and $(P_\n) = [\infty]$ are two cusps (of degree $1$). For $0 < i < r$, consider $C_i := (P_{\p^i})-\deg(P_{\p^i})\cdot[\infty]\in \Div_{\cusp}^0(X_0(\n))(K)$ with $g(C_i) = \prod_{\substack{\dd|\n\\\text{monic}}}\Delta_\dd^{r_\dd}$. Then we compute that
\begin{align*}
&(q-1)(|\p|^{r+1}-|\p|^{r-1})\sum_{\substack{\dd|\n\\\text{monic}}}r_\dd\\ &= (|\p|-1)^2|\p|^{\min\{i, r-i\}-1}-(q-1)(|\p|-1)\deg (P_{\p^i})=0.
\end{align*}
\end{proof}

\begin{Rem}
For an alternative proof of the above lemma, one can count the number of the cusps of $X_0(\n)$ of the same height directly by lemma 3.1 in \cite{papikian_eisenstein_2016}.
\end{Rem}

\subsection{Evaluation of harmonic cochains $\widetilde{r}(\Delta_\n)$} \label{Section: Evaluation of harmonic cochains}
Recall that $\Tree$ is the Bruhat-Tits tree of $\PGL(2,K_\infty)$ with the vertices $V(\Tree) = G(K_\infty)/\mathcal{K}\cdot Z(K_\infty)$ and the oriented edges $E(\Tree) = G(K_\infty)/\mathcal{I}\cdot Z(K_\infty)$; cf. \cite{GekelerREVERSAT}. We begin with some results from \cite{gekeler_1997}. The set $$S_X:=\left\{\begin{pmatrix}
\pi^k & u\\
0 & 1
\end{pmatrix}\mid k\in \mathbb{Z}, u\in K_\infty, u \bmod \pi^k \mathcal{O}_\infty\right\}$$ is a set of representatives for $V(\Tree)$. Denote $v(k,u)$ the vertex corresponding to $\begin{pmatrix}
\pi^k & u\\
0 & 1
\end{pmatrix}$, and let $e(k,u)$ be the edge pointing to $\infty$ with origin $v(k,u)$. Then we have the following lemma:

\begin{lem}[Gekeler {\cite[cor. 2.9]{gekeler_1997}}]
$$\widetilde{r}(\Delta)(e(j+1,0))=\begin{cases}
-(q-1)q^{-j},  & \text{if $j\leq 0$.} \\
(q-1)(q^{j+1}-q-1),  & \text{otherwise.}
\end{cases}$$
\end{lem}
With the help of the above lemma, we are able to prove the following:
\begin{lem}\label{evaluations}
Let $0\neq \n\in A$ with $\delta=\deg \n$.
\begin{enumerate}
    \item For $j\in \mathbb{Z}$, $$\widetilde{r}(\Delta_\n)(e(j+1,0))=\begin{cases}
    -(q-1)q^{\delta-j},  & \text{if $j\leq \delta$.} \\
    (q-1)(q^{j-\delta+1}-q-1),  & \text{otherwise.}
    \end{cases}$$
    \item For $j \geq 1$, $$\widetilde{r}(\Delta_\n)(e(j+1,\pi^j))
    =-(q-1)q^{|\delta-j|}.$$
\end{enumerate}
\end{lem}

\begin{proof}
\begin{enumerate}
    \item Observe that $$\begin{pmatrix}
    \n & 0 \\
    0 & 1
    \end{pmatrix}\begin{pmatrix}
    \pi^{j+1} & 0 \\
    0 & 1
    \end{pmatrix}=\begin{pmatrix}
    \pi^{j-\delta+1} & 0 \\
    0 & 1
    \end{pmatrix}$$ in $E(\Tree) = G(K_\infty)/\mathcal{I}\cdot Z(K_\infty)$. Then we have \begin{align*}
    &\widetilde{r}(\Delta_\n)(e(j+1,0))\\
    &=\widetilde{r}(\Delta)(\begin{pmatrix}
    \n & 0 \\
    0 & 1
    \end{pmatrix}\begin{pmatrix}
    \pi^{j+1} & 0 \\
    0 & 1
    \end{pmatrix})\\
    &=\widetilde{r}(\Delta)(\begin{pmatrix}
    \pi^{j-\delta+1} & 0 \\
    0 & 1
    \end{pmatrix}) \\
    &=\widetilde{r}(\Delta)(e(j-\delta+1,0)) \\
    &=\begin{cases}
    -(q-1)q^{\delta-j},  & \text{if $j\leq \delta$.} \\
    (q-1)(q^{j-\delta+1}-q-1),  & \text{otherwise.}
    \end{cases}
    \end{align*}
    \item For any $\phi\in \HarZ$ and $j > 0$, we have $$(q-1)\phi(e(j+1,\pi^j))+\phi(e(j+1,0))=\phi(e(j,0)).$$ It follows that
    \begin{align*}
    &\widetilde{r}(\Delta_\n)(e(j+1,\pi^j))\\
    &=(q-1)^{-1}[\widetilde{r}(\Delta_\n)(e(j,0))-\widetilde{r}(\Delta_\n)(e(j+1,0))] \\
    &=-(q-1)q^{|\delta-j|}.
    \end{align*}
\end{enumerate}
\end{proof}

\subsection{A maximal root $D_\n$ of $\Delta/\Delta_{\n}$} \label{section: D_n}
Fix a monic $\n\in A$ of degree $\delta > 0$. Recall that $\Delta$ is the Drinfeld discriminant function. Let $D_{\n}$ be the function defined in \cite[p. 200]{gekeler_1997}. By \cite[cor. 3.18]{gekeler_1997}, $D_\n$ is a maximal $k$-th root (up to constant multiple) of $\frac{\Delta}{\Delta_\n}$ in $\mathcal{O}(\Omega)^\ast$, where $$k = \begin{cases}
(q-1)(q^2-1),  & \text{if $\delta$ is even.} \\
(q-1)^2,  & \text{otherwise.}
\end{cases}$$

Recall the following lemma:
\begin{lem} [{\cite[cor. 3.21]{gekeler_1997}}]
Let $\chi_{\n}: \Gamma_0(\n)\rightarrow \mathbb{F}_q^\ast$ be the character defined in \cite[thm. 3.20]{gekeler_1997}. The function $D_{\n}$ transforms under $\Gamma_0(\n)$ according to the character $$\omega_{\n}:=
\begin{cases}
\chi_{\n}\cdot \det^{\delta/2},  & \text{if $\delta$ is even.} \\
\chi_{\n}^2\cdot \det^{\delta},  & \text{otherwise.}
\end{cases}$$
\end{lem}

From the above, we are able to prove the following:
\begin{lem}\label{transformation law}
Let $0 \neq \m\in A$ and $\gamma\in \Gamma_0(\n\m)$. We have $$D_\n(\m \gamma z) = \omega_{\n}(\gamma)D_\n(\m z).$$
\end{lem}

\begin{proof}
Let $\gamma=\begin{pmatrix}
a & b\\ c\m & d
\end{pmatrix}\in \Gamma_0(\n\m)$. If $\delta$ is even, we have
\begin{align*}
&D_\n(\m\gamma z) = D_\n(\begin{pmatrix}
\m & 0 \\ 0 & 1
\end{pmatrix}\begin{pmatrix}
a & b \\ c\m & d
\end{pmatrix}z)\\
&=D_\n(\begin{pmatrix}
a & b\m \\ c & d
\end{pmatrix}\begin{pmatrix}
\m & 0 \\ 0 & 1
\end{pmatrix}z)\\
&=\chi_\n(\begin{pmatrix}
a & b\m \\ c & d
\end{pmatrix})\det(\begin{pmatrix}
a & b\m \\ c & d
\end{pmatrix})^{\delta/2} D_\n(\m z)\\
&=\chi_\n(\begin{pmatrix}
a & b\\ c\m & d
\end{pmatrix})\det(\begin{pmatrix}
a & b\\ c\m & d
\end{pmatrix})^{\delta/2} D_\n(\m z)\\
&=\chi_\n(\gamma)\det(\gamma)^{\delta/2} D_\n(\m z).
\end{align*} By a similar argument when $\delta$ is odd, we obtain the result.
\end{proof}

In the next section, to obtain the optimal upper bound for the order of an element $\overline{C}$ in $\CC(\p^r)$, we write $g(C)$ into the following form: $$g(C)=\left(\prod_{\dd|\n}\Delta_\dd^{r_\dd}(z)\right)^\frac{1}{(q-1)(|\p|-1)|\p|^{r-1}}=\text{const.} \left(\prod_{\substack{1\leq i\leq r\\0\leq j\leq r-i}}D_{\p^i}^{a_{ij}}(\p^j z)\right)^\frac{k}{(q-1)(|\p|-1)|\p|^{r-1}},$$ where $r_\dd$, $a_{ij}$, and $k$ are integers. Note that expression of $g(C)$ in terms of $D_{\p^i}(\p^j z)$ is not unique. We need to find one with largest possible $k$.

\section{Main Theorem}
\subsection{The order of $\overline{[0]-[\infty]}$ in $\CC(\p^r)$} \label{Section: the order of [0]-[infty]}
Fix a prime power $\p^r\in A$. In this section, we investigate the order of $\overline{C_0} := \overline{[0]-[\infty]}$ in $\CC(\p^r)$. For $r=1$, the result is in theorem \ref{Gekeler}. For $r=2$, we have
\begin{thm}[Gekeler \cite{gekeler_1997}]\label{Gekeler 2}
Let $\p\in A$ be a prime. The order of $\overline{C_0}$ in $\CC(\p^2)$ is $$\ord(\overline{C_0})=\frac{M(\p)}{\gcd(q-1, 2, \deg(\p))}.$$
\end{thm}
For $r\geq 3$, we propose the following:
\begin{thm} \label{subtheorem}
Let $\p\in A$ be a prime and $r\geq 3$. The order of $\overline{C_0}$ in $\CC(\p^r)$ is $$\ord(\overline{C_0})=|\p|^{r-1}\frac{M(\p)}{\gcd(q-1, 2, \deg(\p))}.$$
\end{thm}

\begin{Rem}
Note that the formula in the above theorem for $r\geq 3$ does not specialize to the formulas in theorem \ref{Gekeler} or \ref{Gekeler 2} for $r=1$ or $2$.
\end{Rem}

The proof of theorem \ref{subtheorem} will be provided after some preliminary discussions. Now, we assume that $r\geq 3$. First, we want to find a lower bound for the order of $\overline{C_0}$ in $\CC(\p^r)$. We have
\begin{align*}
& g(C_0)=\left(\Delta^{|\p|}\Delta_{\p}^{-1}\Delta_{\p^{r-1}}\Delta_{\p^r}^{-|\p|}\right)^{\frac{1}{(|\p|^2-1)|\p|^{r-1}}}\\
&=\left(\left(\frac{\Delta_{}}{\Delta_{\p^r}}\right)^{|\p|}\left(\frac{\Delta_{\p^{r-1}}}{\Delta_{\p}}\right)\right)^{\frac{1}{(|\p|^2-1)|\p|^{r-1}}}.
\end{align*}
By \cite[cor. 3.18]{gekeler_1997}, $\frac{\Delta_{\p^{r-1}}}{\Delta_{\p}}$ has no $p$-th root in $\mathcal{O}(\Omega)^\ast$, so we have $|\p|^{r-1}\mid\ord(\overline{C_0})$. Moreover, by lemma \ref{evaluations}, $$|\p|^{r-1}\cdot\widetilde{r}(g(C_0))(e(2,\pi))=(q-1)\frac{|\p|^{r-1}}{q}-\frac{q-1}{M(\p)}\frac{|\p|}{q}.$$ Then we obtain the following lemma:
\begin{lem}\label{lower bound}
Let $\p\in A$ be a prime and $r\geq 3$. Then $$|\p|^{r-1}\cdot\denominator\left(\frac{q-1}{M(\p)}\right)=|\p|^{r-1}\frac{M(\p)}{\gcd(q-1, \deg(\p))}$$ divides the order of  $\overline{C_0}$ in $\CC(\p^r)$.
\end{lem}

\begin{proof}
Let $m=\denominator\left(\frac{q-1}{M(\p)}\right)$. Then $m$ is the smallest positive number such that $m|\p|^{r-1}\cdot\widetilde{r}(g(C_0))(e(2,\pi))$ is integral.
\end{proof}

Second, we want to find an upper bound for $\ord(\overline{C_0})$.
\begin{lem}\label{upper bound}
Let $\p\in A$ be a prime and $r\geq 3$. The order of $\overline{C_0}$ in $\CC(\p^r)$ divides
$$|\p|^{r-1}\frac{M(\p)}{\gcd(q-1, 2, \deg(\p))}.$$
\end{lem}

\begin{proof}
If $\deg(\p)$ or $r$ is even, write $$g(C_0)=\left(\left(\frac{\Delta_{}}{\Delta_{\p^r}}\right)^{|\p|}\left(\frac{\Delta_{\p^{r-1}}}{\Delta_{\p}}\right)\right)^{1/(|\p|^{r+1}-|\p|^{r-1})}.$$ Otherwise, write $$g(C_0)=\left(\left(\frac{\Delta_{\p^{r-1}}}{\Delta_{}}\right)\left(\frac{\Delta_{\p^r}}{\Delta_{\p}}\right)\left(\frac{\Delta_{}}{\Delta_{\p^r}}\right)^{|\p|+1}\right)^{1/(|\p|^{r+1}-|\p|^{r-1})}.$$
Consider $f\in \mathcal{O}(\Omega)^\ast$ defined by
$$f(z)=
\begin{cases}
D_{\p^r}^{|\p|}(z)D_{\p^{r-2}}^{-1}(\p z),  & \text{if $\deg(\p^r)$ is even.} \\
D_{\p^{r-1}}^{-1}(z)D_{\p^{r-1}}^{-1}(\p z)D_{\p^r}^{(|\p|+1)/(q+1)}(z),  & \text{otherwise.}
\end{cases}$$ Then
$$g(C_0)=\text{const.}~f^\frac{(q-1)(q^2-1)}{|\p|^{r+1}-|\p|^{r-1}}.$$ Using lemma \ref{transformation law}, we have $$f(\gamma z)=\chi_\p^2(\gamma)\det(\gamma)^{\deg(\p)}f(z) \text{ for }\gamma\in \Gamma_0(\p^r).$$ Moreover, let $m=\gcd(q-1, 2, \deg(\p))$, then $$(\chi_\p^2(\gamma)\det(\gamma)^{\deg(\p)})^{(q-1)/m}=1\text{ for }\gamma\in \Gamma_0(\p^r).$$ Since $f^{(q-1)/m}$ is a modular unit on $X_0(\p^r)$ and $$g(C_0)=\text{const.}~(f^{(q-1)/m})^{\frac{(q^2-1)m}{|\p|^{r+1}-|\p|^{r-1}}},$$ the order of $\overline{C_0}$ divides 
\begin{equation*}
\denominator\left(\frac{(q^2-1)m}{|\p|^{r+1}-|\p|^{r-1}}\right) = |\p|^{r-1}\frac{|\p|^2-1}{(q^2-1)\gcd(q-1, 2, \deg(\p))}.
\end{equation*}
\end{proof}

\begin{proof} [Proof of Theorem \ref{subtheorem}]
Let $f\in \mathcal{O}(\Omega)^\ast$ defined in lemma \ref{upper bound}. Consider $$\restr{\chi_\p^2}{\Gamma_0(\p^r)}\cdot{\det}^{\deg(\p)}: \Gamma_0(\p^r) \longrightarrow \mathbb{F}_q^\ast.$$ The order $s$ of this character is the size of its image in $\mathbb{F}_q^\ast$, which is also the smallest number such that $f^s$ is a modular unit on $X_0(\p^r)$. Observe that 
$$\left\{\left(\chi_{\p}(\gamma),\det(\gamma)\right)\mid \gamma\in \Gamma_0(\p^r)\right\} = \mathbb{F}_q^\ast\times \mathbb{F}_q^\ast.$$ Then we have
\begin{align*}
s &= \lcm\left(\ord\left(\restr{\chi_\p^2}{\Gamma_0(\p^r)}\right),\ord\left({\det}^{\deg(\p)}\right)\right)\\
&=\lcm\left(\frac{q-1}{\gcd(q-1,2)},\frac{q-1}{\gcd(q-1,\deg(\p))}\right)\\
&=\frac{q-1}{\gcd(q-1,2,\deg(\p))}.
\end{align*}
By lemma \ref{upper bound}, $$g(C_0)=\text{const.}~f^\frac{(q-1)(q^2-1)}{|\p|^{r+1}-|\p|^{r-1}}=\text{const.}~(f^s)^\frac{(q-1)(q^2-1)}{(|\p|^{r+1}-|\p|^{r-1})s}.$$ Moreover, by lemma \ref{lower bound}, $$\ord(\overline{C_0}) \geq \denominator\left(\frac{(q-1)(q^2-1)}{|\p|^{r+1}-|\p|^{r-1}}\right).$$ Hence, the order of $\overline{C_0}$ in $\CC(\p^r)$ is $$\denominator\left(\frac{(q-1)(q^2-1)}{(|\p|^{r+1}-|\p|^{r-1})s}\right)=|\p|^{r-1}\frac{|\p|^2-1}{(q^2-1)\gcd(q-1, 2, \deg(\p))}.$$
\end{proof}

\subsection{The structure of $\CC(\p^r)$} \label{section: Main Theorem}
Fix a prime $\p\in A$ and $r\geq 2$. In this section, we compute the structure of $\CC(\p^r)$. Recall that $(P_{\p^i})$ is the sum of all the cusps of $X_0(\p^r)$ of height $\p^i\mid \p^r$, and $C_i := (P_{\p^i})-\deg(P_{\p^i})\cdot[\infty]\in \Div_{\cusp}^0(X_0(\p^r))(K)$. Define $D_0$ and $D_{r-1}\in\Div_{\cusp}^0(X_0(\p^r))(K)$ in the followings. Let
$$D_0 := C_0+(q-1)\left(\sum_{1\leq i\leq \lfloor \frac{r}{2}\rfloor}C_i+\sum_{\lfloor \frac{r}{2}\rfloor+1\leq i\leq r-1}|\p|^{2i-r}C_i\right).$$
\begin{enumerate}
    \item If $r = 2$, let $D_{r-1} := C_1$.
    \item If $r\geq 3$ and $r\equiv 3 \mod 4$, let
    \begin{align*}
    D_{r-1} &:= C_{r-1}-(|\p|^r-|\p|^{r-2})C_1\\&+\sum_{2\leq i\leq \frac{r-1}{2}}(|\p|^{r-1} - |\p|^{r-2} - |\p|^{r-2i+1} + |\p|^{r-2i})C_i\\&-\sum_{\frac{r+1}{2}\leq i\leq r-2}(|\p|^i - |\p|^{\frac{r-1}{2}} + |\p|^{i-\frac{r-1}{2}} - 1)(C_i-|\p|C_{i+1}).
    \end{align*}
    \item If $r\geq 4$ and $r\equiv 0 \mod 4$, let
    \begin{align*}
    D_{r-1} &:= C_{r-1}-(|\p|^r-|\p|^{r-2})C_1\\&+\sum_{2\leq i\leq \frac{r}{2}-1}(|\p|^{r-1} - |\p|^{r-2} - |\p|^{r-2i+1} + |\p|^{r-2i})C_i\\&+\sum_{\substack{\frac{r}{2}\leq i\leq r-2\\\text{$i$: even}}}(|\p|^{i+1}-2|\p|^i+|\p|^{\frac{r}{2}}-|\p|^{i-\frac{r}{2}+1}+1)(C_i-|\p|C_{i+1})\\&-\sum_{\substack{\frac{r}{2}+1\leq i\leq r-3\\\text{$i$: odd}}}(|\p|^{i+1}-|\p|^{\frac{r}{2}}+|\p|^{i-\frac{r}{2}+1}-1)(C_i-|\p|C_{i+1}).
    \end{align*}
    \item If $r\geq 5$ and $r\equiv 1 \mod 4$, let
    \begin{align*}
    D_{r-1} &:= C_{r-1}-(|\p|^r-|\p|^{r-2})C_1\\&+\sum_{2\leq i\leq \frac{r-1}{2}}(|\p|^{r-1} - |\p|^{r-2} - |\p|^{r-2i+1} + |\p|^{r-2i})C_i\\&-\sum_{\substack{\frac{r+1}{2}\leq i\leq r-2\\\text{$i$: odd}}}(2|\p|^{i+1} - |\p|^i - |\p|^{\frac{r-1}{2}} + |\p|^{i-\frac{r-1}{2}} - 1)(C_i-|\p|C_{i+1})\\&-\sum_{\substack{\frac{r+3}{2}\leq i\leq r-3\\\text{$i$: even}}}(|\p|^i - |\p|^{\frac{r-1}{2}} + |\p|^{i-\frac{r-1}{2}} - 1)(C_i-|\p|C_{i+1}).
    \end{align*}
    \item If $r\geq 6$ and $r\equiv 2 \mod 4$, let
    \begin{align*}
    D_{r-1} &:= C_{r-1}-(|\p|^r-|\p|^{r-2})C_1\\&+\sum_{2\leq i\leq \frac{r}{2}-1}(|\p|^{r-1} - |\p|^{r-2} - |\p|^{r-2i+1} + |\p|^{r-2i})C_i\\&-\sum_{\frac{r}{2}\leq i\leq r-2}(|\p|^{i+1} - |\p|^{\frac{r}{2}} + |\p|^{i-\frac{r}{2}+1} - 1)(C_i-|\p|C_{i+1}).
    \end{align*}
\end{enumerate}

We state Main Theorem in the following:
\begin{thm} \label{Main Theorem}
Let $\p\in A$ be a prime and $r\geq 2$. Then $$\CC(\p^r) = \left(\bigoplus_{1\leq i\leq m}\langle \overline{C_i}\rangle\right)\oplus\left(\bigoplus_{m+1\leq i\leq r-2}\langle \overline{C_i-|\p| C_{i+1}}\rangle\right)\oplus \langle  \overline{D_{r-1}}\rangle\oplus \langle  \overline{D_0}\rangle,$$ where $m:=\lfloor \frac{r-1}{2}\rfloor$ and
\begin{enumerate}
    \item $\ord(\overline{C_i})=|\p|^{r-i}M(\p)$ for $1\leq i\leq m$.
    \item $\ord(\overline{C_i-|\p| C_{i+1}})=|\p|^{i}M(\p)$ for $m+1\leq i\leq r-2$.
    \item $\ord(\overline{D_{r-1}})=M(\p)$.
    \item $\ord(\overline{D_0})=\begin{cases}
    \frac{|\p|-1}{q^2-1},  & \text{if $\deg(\p)$ is even.} \\
    \frac{|\p|-1}{q-1},  & \text{otherwise.}
    \end{cases}$
\end{enumerate}
\end{thm}

Before we prove Main Theorem, we need some preliminary lemmas:
\begin{lem} \label{lemma of the structure of C(p^r)}
The exponent of the group $\CC(\p^r)$ divides $|\p|^{r-1}M(\p)$.
\end{lem}

\begin{proof}
The group $\CC(\p^r)$ is generated by $\{\overline{C_0},\overline{C_1},\cdots, \overline{C_{r-1}}\}$. By theorem \ref{Gekeler 2} and \ref{subtheorem}, $\ord(\overline{C_0})\mid |\p|^{r-1}M(\p)$. It suffices to show that $\ord(\overline{C_i})\mid |\p|^{r-1}M(\p)$ for $1\leq i \leq r-1$. In the following, we compute
$$g(C_i)=\left(\prod_{\dd|\p^r}\Delta_\dd^{r_\dd}\right)^{\frac{1}{(q-1)(|\p|^2-1)|\p|^{\max\{i,r-i\}}}}.$$
\begin{enumerate}
    \item If $1\leq i \leq r-3$, then
    $$\prod_{\dd|\p^r}\Delta_\dd^{r_\dd}=\Delta_{\p^{i-1}}^{-|\p|}\Delta_{\p^{i}}^{|\p|^2+1}\Delta_{\p^{i+1}}^{-|\p|}\Delta_{\p^{r-1}}^{|\p|-1}\Delta_{\p^{r}}^{-|\p|^2+|\p|}.$$
    If $\deg(\p)$ or $r-i$ is even, write $$\prod_{\dd|\p^r}\Delta_\dd^{r_\dd}=
        \left(\frac{\Delta_{\p^{r-1}}}{\Delta_{\p^{i-1}}}\right)^{|\p|}\left(\frac{\Delta_{\p^{r-1}}}{\Delta_{\p^{i+1}}}\right)^{|\p|}\left(\frac{\Delta_{\p^{i}}}{\Delta_{\p^{r}}}\right)^{|\p|^2+1}\left(\frac{\Delta_{\p^{r}}}{\Delta_{\p^{r-1}}}\right)^{|\p|+1}.$$
    Otherwise, write
        $$\prod_{\dd|\p^r}\Delta_\dd^{r_\dd}=
        \left(\frac{\Delta_{\p^{r}}}{\Delta_{\p^{i-1}}}\right)^{|\p|}\left(\frac{\Delta_{\p^{r}}}{\Delta_{\p^{i+1}}}\right)^{|\p|}\left(\frac{\Delta_{\p^{i}}}{\Delta_{\p^{r-1}}}\right)^{|\p|^2+1}\left(\frac{\Delta_{\p^{r-1}}}{\Delta_{\p^{r}}}\right)^{|\p|^2+|\p|}.$$
    \item If $r\geq 3$ and $i=r-2$, then 
    \begin{align*}
    &\prod_{\dd|\p^r}\Delta_\dd^{r_\dd}=\Delta_{\p^{r-3}}^{-|\p|}\Delta_{\p^{r-2}}^{|\p|^2+1}\Delta_{\p^{r-1}}^{-1}\Delta_{\p^{r}}^{-|\p|^2+|\p|} \\
    &=\left(\frac{\Delta_{\p^{r-1}}}{\Delta_{\p^{r-3}}}\right)^{|\p|}\left(\frac{\Delta_{\p^{r-2}}}{\Delta_{\p^{r}}}\right)^{|\p|^2+1}\left(\frac{\Delta_{\p^{r}}}{\Delta_{\p^{r-1}}}\right)^{|\p|+1}.
    \end{align*}
    \item If $i=r-1$, then 
    $$\prod_{\dd|\p^r}\Delta_\dd^{r_\dd} = \Delta_{\p^{r-2}}^{-|\p|}\Delta_{\p^{r-1}}^{|\p|^2+|\p|}\Delta_{\p^{r}}^{-|\p|^2} = \left(\frac{\Delta_{\p^{r}}}{\Delta_{\p^{r-2}}}\right)^{|\p|}\left(\frac{\Delta_{\p^{r-1}}}{\Delta_{\p^{r}}}\right)^{|\p|^2+|\p|}.$$
\end{enumerate}
In each case, by extracting maximal roots of the $\Delta$-quotients $\frac{\Delta_{\p^a}}{\Delta_{\p^b}}$ in $\mathcal{O}(\Omega)^\ast$, one can find a $(q-1)(q^2-1)$-th root $f$ of $\prod_{\dd|\p^r}\Delta_\dd^{r_\dd}$ in $\mathcal{O}(\Omega)^\ast$, which turns out to be a modular unit on $X_0(\p^r)$, i.e., $f$ is invariant under $\Gamma_0(\p^r)$. This process is similar to computation in the proof of lemma \ref{upper bound}. From the above, we obtain that $\ord(\overline{C_i})$ divides $|\p|^{\max\{i,r-i\}}M(\p)$ for $1\leq i \leq r-1$.
\end{proof}

\begin{lem} \label{linear independence lemma}
Let $\n\in A$ be monic and $D_i\in \Div_{\cusp}^0(X_0(\n))(K)$ for all $1\leq i\leq k$. Suppose that there exist $e_1, \cdots, e_\ell\in E(\Tree)$ such that 
\begin{enumerate}
    \item The order of $\overline{D_1}$ in $\CC(\n)$ is equal to $$\lcm\{\denominator(\widetilde{r}(g(D_1))(e_j)):~1\leq j \leq \ell\}.$$
    \item $\widetilde{r}(g(D_i))(e_j)\in \mathbb{Z}$ for all $2\leq i\leq k$ and $1\leq j \leq \ell$.
\end{enumerate}
Then we have $\langle \overline{D_i}:~1\leq i\leq k\rangle = \langle \overline{D_1}\rangle \oplus \langle \overline{D_i}:~2\leq i\leq k\rangle\subseteq \CC(\n)$.
\end{lem}

\begin{proof}
Assume that there is a relation $a_1 \overline{D_1} + \cdots + a_k \overline{D_k} = \overline{a_1 D_1 + \cdots + a_k D_k} = 0$ in $\CC(\n)$ with $a_i\in\mathbb{Z}$. Then $D := a_1 D_1 + \cdots + a_k D_k\in \mathcal{U}_\n$ and $\widetilde{r}(g(D)) \in \HarZ$. Therefore, by evaluating $\widetilde{r}(g(D)) = a_1\cdot \widetilde{r}(g(D_1))+\cdots + a_k\cdot \widetilde{r}(g(D_k))$ on each $e_1, \cdots, e_\ell$ and the assumptions in the lemma, we see that $\ord(\overline{D_1})$ divides $a_1$, which implies that $a_1 \overline{D_1}=0$ and also $a_2 \overline{D_2}+\cdots + a_k \overline{D_k} = 0$.
\end{proof}

Now, we are able to prove Main Theorem in the following.
\begin{proof}[Proof of Theorem \ref{Main Theorem}]
Fix a prime $\p \in A$ of degree $\delta > 0$. Let $r \geq 2$ with $m=\lfloor \frac{r-1}{2}\rfloor$. Define $D_i := C_i$ for $1\leq i\leq m$ and $D_i := C_i-|\p| C_{i+1}$ for $m+1\leq i\leq r-2$. Note that $D_{r-1}$ and $D_0$ are defined above. Since $\CC(\p^r)$ is generated by $\{\overline{C_0},\overline{C_1},\cdots, \overline{C_{r-1}}\}$, one can check that $\mathcal{B}:=\{\overline{D_i}:~0\leq i \leq r-1\}$ is also a generating set for $\CC(\p^r)$. We claim that $\CC(\p^r)=\bigoplus_{i=0}^{r-1}\langle \overline{D_i}\rangle$. By lemma \ref{lemma of the structure of C(p^r)}, $\ord(\overline{D_i})\mid |\p|^{r-i}M(\p)$ for $1\leq i\leq m$. Moreover, for $m+1\leq i\leq r-2$, $$g(D_i)=\left(\Delta_{\p^{i-1}}^{-|\p|}\Delta_{\p^{i}}^{|\p|^2+|\p|+1}\Delta_{\p^{i+1}}^{-|\p|^2-|\p|-1}\Delta_{\p^{i+2}}^{|\p|}\right)^{\frac{1}{(q-1)(|\p|^2-1)|\p|^{i}}}.$$ So, $\ord(\overline{D_i})\mid |\p|^{i}M(\p)$ for $m+1 \leq i\leq r-2$. To find lower bounds, define $e_k := e(k+1,\pi^k) \in E(\Tree)$ for $k \geq 1$ by section \ref{Section: Evaluation of harmonic cochains}, then consider the matrix $[\widetilde{r}(g(D_i))(e_{j\delta})]_{1\leq i, j\leq r-2}$. By lemma \ref{evaluations} and \ref{lemma of the structure of C(p^r)}, it takes the form:
$$\begin{bmatrix}
\frac{|\p|^{r-1} - |\p|^{r-2} + 1}{|\p|^{r-1}} & \ast & \ast & \ast & \ast & \ast & \ast\\ \cline{1-1}
|\p|-1 & \multicolumn{1}{|c}{\frac{|\p|^{r-2} - |\p|^{r-3} + 1}{|\p|^{r-2}}} & \ast & \ast & \ast & \ast & \ast\\ \cline{2-2}
\vdots & \ddots & \multicolumn{1}{|c}{\ddots} & \ast & \ast & \ast & \ast\\ \cline{3-3}
(|\p|-1)|\p|^{m-2} & \cdots & |\p|-1 & \multicolumn{1}{|c}{\frac{|\p|^{r-m} - |\p|^{r-m-1} + 1}{|\p|^{r-m}}} & \ast & \ast & \ast\\ \hline
 &  &  & \bord & \frac{1}{|\p|^{m+1}} & \ast & \ast\\ \cline{5-5}
 &  & \bigzero &  & \bord & \ddots & \ast\\ \cline{6-6}
 &  &  &  &  & \bord & \frac{1}{|\p|^{r-2}}\\ \cline{7-7}
\end{bmatrix},$$ where the $(i,j)$-entry is
$$\begin{cases}
(|\p|^{r-i} - |\p|^{r-i-1} + 1)/|\p|^{r-i},  & \text{if $1\leq i=j \leq m$.} \\
1/|\p|^{i},  & \text{if $m+1\leq i=j \leq r-2$.} \\
(|\p|-1)|\p|^{i-j-1},  & \text{if $2\leq i \leq m$ and $j<i$.}
\end{cases}$$
If $\delta \geq 2$, consider an additional matrix $[\widetilde{r}(g(D_i))(e_{(j-1)\delta+1})]_{1\leq i, j\leq r-2}$, which takes the following form by lemma \ref{evaluations} and \ref{lemma of the structure of C(p^r)}:
\begin{small}
$$\begin{bmatrix}
\frac{(|\p|^{r-1} - |\p|^{r-2})M(\p) + 1}{q|\p|^{r-2}M(\p)} & \ast & \ast & \ast & \ast & \ast & \ast\\ \cline{1-1}
(|\p|-1)\frac{|\p|}{q} & \multicolumn{1}{|c}{\ddots} & \ast & \ast & \ast & \ast & \ast\\ \cline{2-2}
\vdots & \ddots & \multicolumn{1}{|c}{\ddots} & \ast & \ast & \ast & \ast\\ \cline{3-3}
(|\p|-1)\frac{|\p|^{m-1}}{q} & \cdots & (|\p|-1)\frac{|\p|}{q} & \multicolumn{1}{|c}{\frac{(|\p|^{r-m} - |\p|^{r-m-1})M(\p) + 1}{q|\p|^{r-m-1}M(\p)}} & \ast & \ast & \ast\\ \hline
 &  &  & \bord & \frac{1}{q|\p|^{m}M(\p)} & \ast & \ast\\ \cline{5-5}
 &  & \bigzero &  & \bord & \ddots & \ast\\ \cline{6-6}
 &  &  &  &  & \bord & \frac{1}{q|\p|^{r-3}M(\p)}\\ \cline{7-7}
\end{bmatrix},$$
\end{small}
where the $(i,j)$-entry is
$$\begin{cases}
((|\p|^{r-i} - |\p|^{r-i-1})M(\p) + 1)/(q|\p|^{r-i-1}M(\p)),  & \text{if $1\leq i=j \leq m$.} \\
1/(q|\p|^{i-1}M(\p)),  & \text{if $m+1\leq i=j \leq r-2$.} \\
(|\p|-1)|\p|^{i-j}/q,  & \text{if $2\leq i \leq m$ and $j<i$.}
\end{cases}$$
From the denominators of the diagonal entries of the above matrices, we have
\begin{enumerate}
    \item For $1\leq i\leq m$, the upper bound $|\p|^{r-i}M(\p)$ of $\ord(\overline{D_i})$ is optimal.
    \item For $m+1\leq i\leq r-2$, the upper bound $|\p|^{i}M(\p)$ of $\ord(\overline{D_i})$ is optimal.
\end{enumerate}
Moreover, all the entries below the diagonal in both matrices are integers. Then by lemma \ref{linear independence lemma}, $$\langle\overline{D_i}:~1\leq i\leq r-2\rangle=\bigoplus_{1\leq i\leq r-2}\langle \overline{D_i}\rangle.$$ If $\delta = 1$, then $\ord(\overline{D_{r-1}})=\ord(\overline{D_0})=1$ by lemma \ref{sublemma 1} and \ref{sublemma 2}, which completes the proof. Assume that $\delta \geq 2$. For $1 \leq j \leq r-2$, we have
\begin{enumerate}
    \item $\widetilde{r}(g(D_{r-1}))(e_{(j-1)\delta+1})\in \mathbb{Z}$ (see Appendix \ref{Appendix 1}).
    \item $\widetilde{r}(g(D_0))(e_{(j-1)\delta+1})
    =\frac{1}{|\p|-1}\cdot \widetilde{r}\left(\frac{\Delta_{\p^{r-1}}}{\Delta_{\p^r}}\right)(e_{(j-1)\delta+1})
    =(q-1)\frac{|\p|^{r-j}}{q}\in \mathbb{Z}.$
\end{enumerate}
Note that $p$ does not divide the orders of $\overline{D_{r-1}}$ and $\overline{D_0}$ by lemma \ref{sublemma 1} and \ref{sublemma 2}. By the second matrix above and lemma \ref{linear independence lemma}, this implies that
$$\left(\bigoplus_{1\leq i\leq r-2}\langle \overline{D_i}\rangle\right)\cap \langle \overline{D_{r-1}}, \overline{D_0}\rangle = \{0\}.$$
Now, observe that
\begin{enumerate}
    \item The denominator of $\widetilde{r}(g(D_{r-1}))(e_{(r-2)\delta+1})$ is $M(\p)$ (see Appendix \ref{Appendix 1}), which is equal to $\ord(\overline{D_{r-1}})$ by lemma \ref{sublemma 1}.
    \item $\widetilde{r}(g(D_{0}))(e_{(r-2)\delta+1}) = (q-1)\frac{|\p|}{q}\in \mathbb{Z}$.
\end{enumerate}
By lemma \ref{linear independence lemma}, $\langle \overline{D_{r-1}}\rangle \cap \langle \overline{D_0}\rangle = \{0\}$. It remains to find the order of $\overline{D_0}$, which is done by lemma \ref{sublemma 2}. In conclusion, the group $\CC(\p^r)$ can be expressed as a direct sum of cyclic groups generated by $\overline{D_i}$'s, which completes the proof.
\end{proof}

\begin{Rem}
A basis $\mathcal{B}$ of $\CC(\p^r)$ is established in the above proof by the following strategy: write $\CC(\p^r)=\langle\overline{C_1},\cdots,\overline{C_{r-1}},\overline{C_0}\rangle = \langle\overline{D_1},\cdots,\overline{D_{r-1}},\overline{D_0}\rangle =: \langle\mathcal{B}\rangle$. The modified generators $\overline{D_i}$ in the order of $i=1,\cdots, r-1, 0$ are constructed one after one using lemma \ref{linear independence lemma} so that each modified generator has no non-trivial relation with all its preceding $\overline{D_i}$'s in $\CC(\p^r)$.
\end{Rem}

\subsection{Proof of lemmas} \label{Proof of Lemmas}
Fix a prime $\p\in A$ and $r\geq 2$. We prove the following lemmas for theorem \ref{Main Theorem}.
\begin{lem} \label{sublemma 1}
The order of $\overline{D_{r-1}}$ in $\CC(\p^r)$ divides $M(\p)$.
\end{lem}

\begin{proof}
In the following, we compute $\displaystyle g(D_{r-1}) = \left(\prod_{\text{monic }\dd|\p^r}\Delta_\dd^{r_\dd}\right)^{\frac{1}{(q-1)(|\p|^2-1)}}$.
\begin{enumerate}
\item If $r = 2$, then $g(D_{1}) = \left(\Delta_{}^{-1}\Delta_{\p}^{|\p|+1}\Delta_{\p^2}^{-|\p|}\right)^{\frac{1}{(q-1)(|\p|^2-1)}}$.
\item If $r = 3$, then $g(D_{2}) = \left(\Delta_{}^{|\p|^2-1}\Delta_{\p}^{-|\p|^3}\Delta_{\p^2}^{|\p|+1}\Delta_{\p^3}^{|\p|^3-|\p|^2-|\p|}\right)^{\frac{1}{(q-1)(|\p|^2-1)}}$.
\item If $r = 4$, then
$$r_\dd = \begin{cases}
|\p|^2-1,  & \text{if $\dd=1$.} \\
-|\p|^3-|\p|^2+|\p|+1,  & \text{if $\dd=\p^{}$.} \\
|\p|^3+|\p|^2-|\p|-2,  & \text{if $\dd=\p^{2}$.} \\
-|\p|^3-|\p|^2+2|\p|+2,  & \text{if $\dd=\p^{3}$.} \\
|\p|^3-2|\p|,  & \text{if $\dd=\p^{4}$.}
\end{cases}$$
\item If $r = 5$, then
$$r_\dd = \begin{cases}
|\p|^2-1,  & \text{if $\dd=1$.} \\
-|\p|^3-|\p|^2+|\p|+1,  & \text{if $\dd=\p^{}$.} \\
|\p|^3+2|\p|^2-|\p|-2,  & \text{if $\dd=\p^{2}$.} \\
-2|\p|^3-2|\p|^2+|\p|+2,  & \text{if $\dd=\p^{3}$.} \\
2|\p|^3+|\p|^2-|\p|,  & \text{if $\dd=\p^{4}$.} \\
-|\p|^2,  & \text{if $\dd=\p^{5}$.}
\end{cases}$$
\item If $r = 6$, then
$$r_\dd = \begin{cases}
|\p|^2-1,  & \text{if $\dd=1$.} \\
-|\p|^3-|\p|^2+|\p|+1,  & \text{if $\dd=\p^{}$.} \\
|\p|^3+|\p|^2-|\p|-1,  & \text{if $\dd=\p^{2}$.} \\
-|\p|^3+|\p|,  & \text{if $\dd=\p^{3}$.} \\
-|\p|^2,  & \text{if $\dd=\p^{4}$.} \\
|\p|^3+1,  & \text{if $\dd=\p^{5}$.} \\
-|\p|,  & \text{if $\dd=\p^{6}$.}
\end{cases}$$
\item If $r\geq 7$ and $r\equiv 3 \mod 4$, then
$$r_\dd = \begin{cases}
|\p|^2-1,  & \text{if $\dd=1$ or $\p^{2}$.} \\
-|\p|^3-|\p|^2+|\p|+1,  & \text{if $\dd=\p^{}$.} \\
|\p|^{\frac{r+1}{2}}-|\p|^{\frac{r-1}{2}}+|\p|-1,  & \text{if $\dd=\p^{\frac{r-1}{2}}$.} \\
-|\p|^{\frac{r-1}{2}}+|\p|^{\frac{r-3}{2}}-|\p|^2+|\p|,  & \text{if $\dd=\p^{\frac{r+1}{2}}$.} \\
-|\p|,  & \text{if $\dd=\p^{r-2}$.} \\
|\p|^{\frac{r-1}{2}}-|\p|^{\frac{r-3}{2}}+2,  & \text{if $\dd=\p^{r-1}$.} \\
-|\p|^{\frac{r+1}{2}}+|\p|^{\frac{r-1}{2}}+|\p|^3-2|\p|,  & \text{if $\dd=\p^{r}$.} \\
0, & \text{otherwise.}
\end{cases}$$
\item If $r\geq 8$ and $r\equiv 0 \mod 4$, then
$$r_\dd = \begin{cases}
|\p|^2-1,  & \text{if $\dd=1$ or $\p^{2}$.} \\
-|\p|^3-|\p|^2+|\p|+1,  & \text{if $\dd=\p^{}$.} \\
|\p|^{\frac{r}{2}}-|\p|^{\frac{r}{2}-1}-|\p|^2+|\p|,  & \text{if $\dd=\p^{\frac{r}{2}-1}$.} \\
-|\p|^{\frac{r}{2}-1}+|\p|^{\frac{r}{2}-2}+|\p|^3+|\p|^2-2,  & \text{if $\dd=\p^{\frac{r}{2}}$.} \\
(-1)^k\cdot 2(|\p|^3+|\p|^2-|\p|-1),  & \text{if $\dd=\p^k$ and} \\
& \frac{r}{2}+1\leq k \leq r-3. \\
2|\p|^3+|\p|^2-2|\p|-2,  & \text{if $\dd=\p^{r-2}$.} \\
|\p|^{\frac{r}{2}-1}-|\p|^{\frac{r}{2}-2}-|\p|^3-|\p|^2+|\p|+3,  & \text{if $\dd=\p^{r-1}$.} \\
-|\p|^{\frac{r}{2}}+|\p|^{\frac{r}{2}-1}+|\p|^3+|\p|^2-3|\p|,  & \text{if $\dd=\p^{r}$.} \\
0, & \text{otherwise.}
\end{cases}$$
\item If $r\geq 9$ and $r\equiv 1 \mod 4$, then
$$r_\dd = \begin{cases}
|\p|^2-1,  & \text{if $\dd=1$ or $\p^{2}$.} \\
-|\p|^3-|\p|^2+|\p|+1,  & \text{if $\dd=\p^{}$.} \\
|\p|^{\frac{r+1}{2}}-|\p|^{\frac{r-1}{2}}+2|\p|^2-|\p|-1,  & \text{if $\dd=\p^{\frac{r-1}{2}}$.} \\
-|\p|^{\frac{r-1}{2}}+|\p|^{\frac{r-3}{2}}-2|\p|^3-|\p|^2+|\p|+2,  & \text{if $\dd=\p^{\frac{r+1}{2}}$.} \\
(-1)^k\cdot 2(|\p|^3+|\p|^2-|\p|-1),  & \text{if $\dd=\p^k$ and} \\
& \frac{r+3}{2}\leq k \leq r-3. \\
-2|\p|^3-2|\p|^2+|\p|+2,  & \text{if $\dd=\p^{r-2}$.} \\
|\p|^{\frac{r-1}{2}}-|\p|^{\frac{r-3}{2}}+2|\p|^3,  & \text{if $\dd=\p^{r-1}$.} \\
-|\p|^{\frac{r+1}{2}}+|\p|^{\frac{r-1}{2}}+|\p|^3-2|\p|^2,  & \text{if $\dd=\p^{r}$.} \\
0, & \text{otherwise.}
\end{cases}$$
\item If $r\geq 10$ and $r\equiv 2 \mod 4$, then
$$r_\dd = \begin{cases}
|\p|^2-1,  & \text{if $\dd=1$ or $\p^{2}$.} \\
-|\p|^3-|\p|^2+|\p|+1,  & \text{if $\dd=\p^{}$.} \\
|\p|^{\frac{r}{2}}-|\p|^{\frac{r}{2}-1}+|\p|^2-|\p|,  & \text{if $\dd=\p^{\frac{r}{2}-1}$.} \\
-|\p|^{\frac{r}{2}-1}+|\p|^{\frac{r}{2}-2}-|\p|^3+|\p|^2,  & \text{if $\dd=\p^{\frac{r}{2}}$.} \\
-|\p|^2,  & \text{if $\dd=\p^{r-2}$.} \\
|\p|^{\frac{r}{2}-1}-|\p|^{\frac{r}{2}-2}+|\p|^3-|\p|^2+|\p|+1,  & \text{if $\dd=\p^{r-1}$.} \\
-|\p|^{\frac{r}{2}}+|\p|^{\frac{r}{2}-1}+|\p|^3-|\p|^2-|\p|,  & \text{if $\dd=\p^{r}$.} \\
0, & \text{otherwise.}
\end{cases}$$
\end{enumerate}
Since $r_\dd\in \mathbb{Z}$ for all monic $\dd\mid\p^r$, we have $\ord(\overline{D_{r-1}})\mid (q-1)(|\p|^2-1)$, which implies that $p\nmid \ord(\overline{D_{r-1}})$. Now, the proof is complete by lemma \ref{lemma of the structure of C(p^r)}.
\end{proof}

\begin{lem} \label{sublemma 2}
The order of $\overline{D_0}$ in $\CC(\p^r)$ is $N(\p)$.
\end{lem}

\begin{proof}
We have $$g(D_0) = \left(\frac{\Delta_{\p^{r-1}}}{\Delta_{\p^r}}\right)^{\frac{1}{|\p|-1}}.$$
Recall that $\frac{\Delta_{\p^{r-1}}}{\Delta_{\p^r}}$ has a maximal $k$-th root $f(z) := D_\p(\p^{r-1}z)$ (up to constant multiple) in $\mathcal{O}(\Omega)^\ast$, where $$k=\begin{cases}
(q-1)(q^2-1),  & \text{if $\deg(\p)$ is even.} \\
(q-1)^2,  & \text{otherwise.}
\end{cases}$$ By lemma \ref{transformation law}, $f(\gamma z) = \omega_{\p}(\gamma) f(z)$ for $\gamma\in \Gamma_0(\p^r)$. Since $f^{q-1}$ is the minimal power of $f$ which is invariant under $\Gamma_0(\p^r)$, and
$$g(D_0) = \text{const.}~f^{\frac{k}{|\p|-1}}=\text{const.}~(f^{q-1})^{\frac{k/(q-1)}{|\p|-1}},$$
we see that $\frac{|\p|-1}{k/(q-1)}$ is the order of $\overline{D_0}$ in $\CC(\p^r)$.
\end{proof}

\begin{appendices}
\section{Computational results on $\widetilde{r}(g(D_{r-1}))$} \label{Appendix 1}
Fix a prime $\p\in A$ with $\delta:=\deg(\p)\geq 2$ and $r\geq 2$. Recall that the group $\CC(\p^r)$ is generated by $\{\overline{C_0}, \overline{C_1}, \cdots, \overline{C_{r-1}}\}$. Define $e_k := e(k+1,\pi^k) \in E(\Tree)$ for $k \geq 1$. Using lemma \ref{evaluations} and \ref{lemma of the structure of C(p^r)}, for $1\leq j \leq r-1$, we have
\begin{enumerate}
    \item If $1\leq i \leq r-1$ with $m(i):=\min\{i,r-i\}$, then $$\widetilde{r}(g(C_i))(e_{(j-1)\delta+1})
    =\begin{cases}
    \frac{(|\p|^{r-i}-|\p|^{r-i-1})(|\p|^2-1)+q^2-1}{q|\p|^{r-m(i)-1}(|\p|^{2}-1)},  & \text{if $j = i$.} \\
    \frac{(|\p|^{r-i}-|\p|^{r-i-1})(|\p|^2-1)+|\p|^2-q^2}{q|\p|^{r-m(i)}(|\p|^{2}-1)},  & \text{if $j = i+1$.} \\
    \frac{(|\p|-1)|\p|^{m(i)-j}}{q},  & \text{otherwise.}
    \end{cases}$$
    \item If $\lfloor \frac{r+1}{2} \rfloor \leq i \leq r-2$, then $$\widetilde{r}(g(C_{i}-|\p|C_{i+1}))(e_{(j-1)\delta+1})
    =\begin{cases}
    \frac{q^2-1}{q|\p|^{i-1}(|\p|^2-1)},  & \text{if $j=i$.} \\
    \frac{|\p|-q^2}{q|\p|^{i}(|\p|-1)},  & \text{if $j=i+1$.} \\
    -\frac{|\p|^2-q^2}{q|\p|^{i}(|\p|^2-1)},  & \text{if $j=i+2$.} \\
    0,  & \text{otherwise.}
    \end{cases}$$
\end{enumerate}
From the above and the definition of $D_{r-1}$ in section \ref{section: Main Theorem}, we compute that
\begin{enumerate}
\item If $r = 2$, then $q\cdot \widetilde{r}(g(D_{r-1}))(e_{1})=|\p|-1+\frac{1}{M(\p)}$.
\item If $r\geq 3$ and $r\equiv 3 \mod 4$, then $q\cdot \widetilde{r}(g(D_{r-1}))(e_{(j-1)\delta+1})$ is 
\begin{small}
\begin{align*}
    \begin{cases}
    \!\begin{aligned}
    & (|\p|^{\frac{3r-5}{2}}-|\p|^{r}-|\p|^{r-1}+|\p|^{\frac{r-1}{2}})(|\p|-1)+|\p|-q^2, \end{aligned} & \text{if $j =1$.} \\
    \!\begin{aligned}
    &(|\p|^{\frac{3r-7}{2}}-|\p|^{r-1}-|\p|^{r-2}+|\p|^{\frac{r-3}{2}})(|\p|-1)-|\p|+q^2, \end{aligned} & \text{if $j =2$ and $r \geq 7$.} \\
    \!\begin{aligned}
    &(|\p|^{\frac{3r-3}{2}-j}-|\p|^{r-j+1}-|\p|^{r-j}+|\p|^{\frac{r+1}{2}-j}+q^2|\p|^{j-3})(|\p|-1),\end{aligned} & \text{if $3 \leq j \leq \frac{r-1}{2}$.} \\
    \!\begin{aligned}
    & (|\p|^{r-2}-|\p|^{\frac{r+1}{2}}-|\p|^{\frac{r-1}{2}})(|\p|-1)+\sum_{0\leq i\leq \frac{r-7}{2}}(|\p|^2-q^2)(-|\p|)^i,
    \end{aligned} & \text{if $j = \frac{r+1}{2}$ and $r \geq 7$.} \\
    \!\begin{aligned}
    (|\p|^{\frac{3r-3}{2}-j}-|\p|^{r-j+1}-|\p|^{r-j})(|\p|-1),
    \end{aligned}  & \text{if $\frac{r+3}{2} \leq j \leq r-2$.} \\
    \!\begin{aligned}
    |\p|^{\frac{r+1}{2}}-|\p|^{\frac{r-1}{2}}-|\p|^3+|\p|+\frac{|\p|}{M(\p)},
    \end{aligned}  & \text{if $j = r-1$.}
    \end{cases}
\end{align*}
\end{small}
\item If $r\geq 4$ and $r\equiv 0 \mod 4$, then $q\cdot \widetilde{r}(g(D_{r-1}))(e_{(j-1)\delta+1})$ is 
\begin{small}
\begin{align*}
    \begin{cases}
    \!\begin{aligned}
    &(|\p|^{\frac{3r}{2}-3}-|\p|^{r}-|\p|^{r-1}+|\p|^{\frac{r}{2}})(|\p|-1)+|\p|-q^2, \end{aligned} & \text{if $j =1$.} \\
    \!\begin{aligned}
    &(|\p|^{\frac{3r}{2}-4}-|\p|^{r-1}-|\p|^{r-2}+|\p|^{\frac{r}{2}-1})(|\p|-1)-|\p|+q^2, \end{aligned} & \text{if $j =2$.} \\
    \!\begin{aligned}
    &(|\p|^{\frac{3r}{2}-j-2}-|\p|^{r-j+1}-|\p|^{r-j}+|\p|^{\frac{r}{2}-j+1}+q^2|\p|^{j-3})(|\p|-1),\end{aligned} & \text{if $3 \leq j \leq \frac{r}{2}-1$.} \\
    \!\begin{aligned}
    &(|\p|^{r-2}-|\p|^{\frac{r}{2}+1}-|\p|^{\frac{r}{2}}+|\p|)(|\p|-1)-|\p|+q^2\\
    & +\sum_{0\leq i\leq \frac{r-8}{2}}(|\p|^2-q^2)(-|\p|)^i,
    \end{aligned} & \text{if $j = \frac{r}{2}$.} \\
    \!\begin{aligned}
    &(|\p|^{\frac{3r}{2}-j-2}-|\p|^{r-j+1}-|\p|^{r-j})(|\p|-1)+(-1)^{j+1}2(|\p|-q^2),
    \end{aligned} & \text{if $\frac{r}{2}+1 \leq j \leq r-2$.} \\
    \!\begin{aligned}
    &|\p|^{\frac{r}{2}}-|\p|^{\frac{r}{2}-1}-|\p|^3+3|\p|-2q^2+\frac{|\p|^2}{M(\p)},
    \end{aligned} & \text{if $j = r-1$.}
    \end{cases}
\end{align*}
\end{small}
\item If $r\geq 5$ and $r\equiv 1 \mod 4$, then $q\cdot \widetilde{r}(g(D_{r-1}))(e_{(j-1)\delta+1})$ is
\begin{small}
\begin{align*}
    \begin{cases}
    \!\begin{aligned}
    & (|\p|^{\frac{3r-5}{2}}-|\p|^{r}-|\p|^{r-1}+|\p|^{\frac{r-1}{2}})(|\p|-1)+|\p|-q^2, \end{aligned} & \text{if $j =1$.} \\
    \!\begin{aligned}
    &(|\p|^{\frac{3r-7}{2}}-|\p|^{r-1}-|\p|^{r-2}+|\p|^{\frac{r-3}{2}})(|\p|-1)-|\p|+q^2, \end{aligned} & \text{if $j =2$.} \\
    \!\begin{aligned}
    &(|\p|^{\frac{3r-3}{2}-j}-|\p|^{r-j+1}-|\p|^{r-j}+|\p|^{\frac{r+1}{2}-j}+q^2|\p|^{j-3})(|\p|-1),\end{aligned} & \text{if $3 \leq j \leq \frac{r-1}{2}$.} \\
    \!\begin{aligned}
    & (|\p|^{r-2}-|\p|^{\frac{r+1}{2}}-|\p|^{\frac{r-1}{2}})(|\p|-1)+2(|\p|-q^2)\\
    & -\sum_{0\leq i\leq \frac{r-7}{2}}(|\p|^2-q^2)(-|\p|)^i,
    \end{aligned} & \text{if $j = \frac{r+1}{2}$.} \\
    \!\begin{aligned}
    & (|\p|^{\frac{3r-3}{2}-j}-|\p|^{r-j+1}-|\p|^{r-j})(|\p|-1)+(-1)^{j+1}2(|\p|-q^2),
    \end{aligned}& \text{if $\frac{r+3}{2} \leq j \leq r-2$.} \\
    \!\begin{aligned}
    & |\p|^{\frac{r+1}{2}}-|\p|^{\frac{r-1}{2}}-|\p|^3-|\p|+2q^2+\frac{|\p|}{M(\p)},
    \end{aligned}& \text{if $j = r-1$.}
    \end{cases}
\end{align*}
\end{small}
\item If $r\geq 6$ and $r\equiv 2 \mod 4$, then $q\cdot \widetilde{r}(g(D_{r-1}))(e_{(j-1)\delta+1})$ is 
\begin{small}
\begin{align*}
    \begin{cases}
    \!\begin{aligned}
    & (|\p|^{\frac{3r}{2}-3}-|\p|^{r}-|\p|^{r-1}+|\p|^{\frac{r}{2}})(|\p|-1)+|\p|-q^2, \end{aligned} & \text{if $j =1$.} \\
    \!\begin{aligned}
    &(|\p|^{\frac{3r}{2}-4}-|\p|^{r-1}-|\p|^{r-2}+|\p|^{\frac{r}{2}-1})(|\p|-1)-|\p|+q^2, \end{aligned} & \text{if $j =2$.} \\
    (|\p|^{\frac{3r}{2}-j-2}-|\p|^{r-j+1}-|\p|^{r-j}+|\p|^{\frac{r}{2}-j+1}+q^2|\p|^{j-3})(|\p|-1), & \text{if $3 \leq j \leq \frac{r}{2}-1$.}\\
    \!\begin{aligned}
    & (|\p|^{r-2}-|\p|^{\frac{r}{2}+1}-|\p|^{\frac{r}{2}})(|\p|-1)+|\p|^2-q^2\\
    & -\sum_{0\leq i\leq \frac{r-8}{2}}(|\p|^2-q^2)(-|\p|)^i,
    \end{aligned} & \text{if $j = \frac{r}{2}$.} \\
    \!\begin{aligned}
    (|\p|^{\frac{3r}{2}-j-2}-|\p|^{r-j+1}-|\p|^{r-j})(|\p|-1),
    \end{aligned}& \text{if $\frac{r}{2}+1 \leq j \leq r-2$.} \\
    \!\begin{aligned}
    |\p|^{\frac{r}{2}}-|\p|^{\frac{r}{2}-1}-|\p|^3+|\p|+\frac{|\p|^2}{M(\p)},
    \end{aligned}& \text{if $j = r-1$.}
    \end{cases}
\end{align*}
\end{small}
\end{enumerate}
\end{appendices}

\bibliographystyle{acm}
\bibliography{Bibliography}

\begin{thebibliography}{10}

\bibitem{drinfeld_elliptic_1974}
{\sc Drinfeld, V.~G.}
\newblock {Elliptic modules}.
\newblock {\em Mathematics of the {USSR}-Sbornik 23}, 4 (1974), 561--592.

\bibitem{Gekeler1984}
{\sc Gekeler, E.-U.}
\newblock {Modulare Einheiten für Funktionenkörper.}
\newblock {\em Journal für die reine und angewandte Mathematik 348\/} (1984), 94--115.

\bibitem{GEKELER_product_expansion}
{\sc Gekeler, E.-U.}
\newblock {A product expansion for the discriminant function of Drinfeld modules of rank two}.
\newblock {\em Journal of Number Theory 21}, 2 (1985), 135--140.

\bibitem{gekeler_drinfeld_1986}
{\sc Gekeler, E.-U.}
\newblock {\em {Drinfeld Modular Curves}}, vol.~1231 of {\em Lecture Notes in Mathematics}.
\newblock Springer, 1986.

\bibitem{gekeler_1997}
{\sc Gekeler, E.-U.}
\newblock {On the Drinfeld discriminant function}.
\newblock {\em Compositio Mathematica 106}, 2 (1997), 181–202.

\bibitem{gekeler_note_2000}
{\sc Gekeler, E.-U.}
\newblock A note on the finiteness of certain cuspidal divisor class groups.
\newblock {\em Israel Journal of Mathematics 118}, 1 (2000), 357--368.

\bibitem{gekeler_invariants_2001}
{\sc Gekeler, E.-U.}
\newblock {Invariants of Some Algebraic Curves Related to Drinfeld Modular Curves}.
\newblock {\em Journal of Number Theory 90}, 1 (2001), 166--183.

\bibitem{gekeler_fundamental_1995}
{\sc Gekeler, E.-U., and Nonnengardt, U.}
\newblock {Fundamental domains of some arithmetic groups over function fields}.
\newblock {\em International Journal of Mathematics 06}, 05 (1995), 689--708.

\bibitem{GekelerREVERSAT}
{\sc Gekeler, E.-U., and Reversat, M.}
\newblock {Jacobians of Drinfeld modular curves.}
\newblock {\em Journal für die reine und angewandte Mathematik 476\/} (1996), 27--94.

\bibitem{ling_q-rational_1997}
{\sc Ling, S.}
\newblock {On the $\mathbb{Q}$-rational cuspidal subgroup and the component group of $J_0(p^r)$}.
\newblock {\em Israel Journal of Mathematics 99}, 1 (1997), 29--54.

\bibitem{mazur_modular_1977}
{\sc Mazur, B.}
\newblock {Modular curves and the Eisenstein ideal}.
\newblock {\em Publications Mathématiques de l'Institut des Hautes Études Scientifiques 47}, 1 (1977), 33--186.

\bibitem{Ogg}
{\sc Ogg, A.~P.}
\newblock {Diophantine equations and modular forms}.
\newblock {\em Bulletin of the American Mathematical Society 81}, 1 (1975), 14 -- 27.

\bibitem{Pal2005}
{\sc P{\'a}l, A.}
\newblock {On the torsion of the Mordell-Weil group of the Jacobian of Drinfeld modular curves.}
\newblock {\em Documenta Mathematica 10\/} (2005), 131--198.

\bibitem{papikian_eisenstein_2016}
{\sc Papikian, M., and Wei, F.-T.}
\newblock {On the Eisenstein ideal over function fields}.
\newblock {\em Journal of Number Theory 161\/} (2016), 384--434.

\bibitem{VanderPut1981-1982}
{\sc Van~der Put, M.}
\newblock {Les fonctions thêta d'une courbe de Mumford}.
\newblock {\em Groupe de travail d'analyse ultramétrique 9}, 1 (1981-1982), 1--12.

\bibitem{YOO_2023}
{\sc Yoo, H.}
\newblock {The rational cuspidal divisor class group of $\ensuremath{X_0(N)}$}.
\newblock {\em Journal of Number Theory 242\/} (2023), 278--401.

\end{thebibliography}

\end{document}